\let\OLDthebibliography\thebibliography
\renewcommand\thebibliography[1]{
  \OLDthebibliography{#1}
  \setlength{\parskip}{3pt}
  \setlength{\itemsep}{0pt plus 0.3ex}
}
\def\numberlikeadb{\global\def\theequation{\thesection.\arabic{equation}}}
\newtheorem{theorem}{Theorem}[section]
\newtheorem{corollary}[theorem]{Corollary}
\newtheorem{proposition}[theorem]{Proposition}
\newtheorem{remark}[theorem]{Remark}
\begin{document}

\title{The variance-gamma ratio distribution
}
\author{Robert E. Gaunt\footnote{Department of Mathematics, The University of Manchester, Oxford Road, Manchester M13 9PL, UK, robert.gaunt@manchester.ac.uk; siqi.li-8@postgrad.manchester.ac.uk}\:\, and Siqi L$\mathrm{i}^{*}$}

\date{} 
\maketitle

\vspace{-10mm}

\begin{abstract} Let $X$ and $Y$ be independent variance-gamma random variables with zero location parameter; then the exact probability density function of the ratio $X/Y$ is derived. Some basic distributional properties are also derived, including identification of parameter regimes under which the density is bounded, asymptotic approximations of tail probabilities, and fractional moments; in particular, we see that the mean is undefined. In the case that $X$ and $Y$ are independent symmetric variance-gamma random variables, an exact formula is also given for the cumulative distribution function of the ratio $X/Y$.
\end{abstract}

\noindent{{\bf{Keywords:}}} Variance-gamma distribution; ratio distribution; product of correlated normal random variables; hypergeometric function; Meijer $G$-function

\noindent{{{\bf{AMS 2010 Subject Classification:}}} Primary 60E05; 62E15

\section{Introduction}

The variance-gamma (VG) distribution with parameters $m > -1/2$, $0\leq|\beta|<\alpha$, $\mu \in \mathbb{R}$ has probability density function (PDF)
\begin{equation}\label{vgpdf} f_X(x) = M \mathrm{e}^{\beta (x-\mu)}|x-\mu|^{m}K_{m}(\alpha|x-\mu|), \quad x\in \mathbb{R},
\end{equation}
where the normalising constant is given by
\[M=M_{m,\alpha,\beta}=\frac{\gamma^{2m+1}}{\sqrt{\pi}(2\alpha)^m \Gamma(m+1/2)},\]
with $\gamma^2=\alpha^2-\beta^2$. Here
 $K_m(x)$ is a modified Bessel function of the second kind, which is defined in Appendix \ref{appa}. If the random variable $X$ has PDF (\ref{vgpdf}), then we write $X\sim\mathrm{VG}(m,\alpha,\beta,\mu)$. The VG distribution is also referred to as the Bessel function distribution \cite{m32}, the generalized Laplace distribution \cite{kkp01} and the McKay Type II distribution \cite{ha04}. Alternative parametrisations can be found in \cite{gaunt vg,kkp01,mcc98}.  The VG distribution is widely used in financial modelling \cite{mcc98,madan}, 
and further application areas and distributional properties are given in
Chapter 4 of the book \cite{kkp01}. 
In this paper, we set $\mu=0$.

Let $X$ and $Y$ be independent random variables. The distribution of the ratio $X/Y$ for $X\sim\mathrm{VG}(m,0,1,0)$ and $Y\sim\mathrm{VG}(n,0,1,0)$, $m,n>1$, was studied by \cite{nk06} (they referred to $X$ and $Y$ as Bessel function random variables). In particular, an exact formula, expressed in terms of the Gaussian hypergeometric function (defined in Appendix \ref{appa}), was obtained for the PDF. The study of \cite{nk06} was motivated by the fact that the ratio of independent random variables arises throughout the sciences and the already mentioned fact that VG random variables appear in many applications, so that the ratio $X/Y$ could, for example, represent the ratio of distributions of log-returns of two different financial assets. However, it has been observed, for example, that when fitting the VG distribution to log-returns of financial assets the skewness parameter $\beta$ is non-zero (see, for example, \cite{s04}). This provides motivation for this paper, in which we derive the exact distribution of the VG ratio $X/Y$ for independent $X\sim \mathrm{VG}(m,\alpha_1,\beta_1,0)$ and $Y\sim\mathrm{VG}(n,\alpha_2,\beta_2,0)$, with $m,n>-1/2$, $0\leq|\beta_i|<\alpha_i$, $i=1,2$, thereby generalising results of \cite{nk06}. In the case of a ratio of independent symmetric VG random variables ($\beta_1=\beta_2=0$), we also derive an exact formula for the cumulative distribution function (CDF) of the ratio $X/Y$, a key distributional property that is not given in \cite{nk06}. This formula is expressed in terms of the Meijer $G$-function, which is defined in Appendix \ref{appa}.

We stress that our results hold for $m,n>-1/2$, a wider range of validity than claimed by \cite{nk06}. This is significant because the regime $m<1$ is often encountered in applications; for example, when fitting the VG distribution to log returns of financial assets \cite{s04}. Another source of interest is that in the case $m=0$ the VG distribution corresponds to the distribution of the product of two correlated zero mean normal random variables, which itself has numerous applications dating back to 1936 with the work of \cite{craig}; see \cite{gaunt22} for an overview. Results corresponding to this case are given in Corollary \ref{cor2.10}.
Moreover, the case $m,n<1$ warrants attention because the VG ratio distribution undergoes a significant change in behaviour in that the density is bounded for $m>0$, but has a singularity at the origin for $-1/2<m\leq0$;  see Proposition \ref{prop2.50}. The tails of the distribution also become heavier if $-1/2<n\leq0$; see Proposition \ref{prop2.5} and Corollary \ref{cor2.7}, where in the corollary we give asymptotic approximations for tail probabilities. 
It follows from Proposition \ref{prop2.5} that the mean of the VG ratio distribution is undefined.

\section{The variance-gamma ratio distribution}

\subsection{Probability density function}


In the following theorem, we provide an explicit expression for the PDF $f_Z(z)$ of $Z=X/Y$ in terms of an infinite series involving the Gaussian hypergeometric function.
Throughout this paper, we let $\gamma_i^2=\alpha_i^2-\beta_i^2$, $i=1,2$. We also denote $a_{ij}=(1+(-1)^{i+j})/2$, $i,j\geq0$, so that $a_{ij}=1$ if $i+j$ is even, and $a_{ij}=0$ if $i+j$ is odd. 

\begin{theorem}\label{thm1}
Let $m,n>-1/2$, $0\leq|\beta_i|<\alpha_i$, $i=1,2$. Suppose $X \sim \mathrm{VG}(m,\alpha_1,\beta_1,0) $ and $Y \sim \mathrm{VG}(n,\alpha_2,\beta_2,0) $ are independent. Let $Z=X/Y$. Then, for $z\in\mathbb{R}$, 
\begin{align}\label{eq:3}
f_Z(z) & = \frac{\gamma_1^{2m+1}\gamma_2^{2n+1}|z|^{-2n-2}}{\pi\alpha_1^{2m+2n+2}\Gamma(m+1/2)\Gamma(n+1/2)}  \sum^\infty_{i,j=0} \frac{ \beta_1^i\beta_2 ^j}{i!j!} \frac{2^{i+j} a_{ij}((i+j)/2)!z^{-j} }{\alpha_1^{i+j} \Gamma(m+n+2+i+j)}  \nonumber
 \\ &\quad \times \Gamma\Big(m+n+1+\frac{i+j}{2}\Big)\Gamma\Big(m+1+\frac{i+j}{2}\Big) \Gamma\Big(n+1+\frac{i+j}{2}\Big) \nonumber
 \\ &\quad \times {}_2F_1\bigg(m+n+1+\frac{i+j}{2},n+1+\frac{i+j}{2};m+n+2+i+j; 1-\frac{\alpha^2_2}{\alpha_1^2 z^2} \bigg).
\end{align}
\end{theorem}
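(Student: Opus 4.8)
The plan is to compute the density of $Z=X/Y$ directly from the standard ratio formula
\[
f_Z(z) = \int_{-\infty}^\infty |y|\, f_X(zy)\, f_Y(y)\,\mathrm{d}y,
\]
substituting the VG densities (\ref{vgpdf}) with $\mu=0$. After inserting the explicit forms, the integrand contains the product $\mathrm{e}^{\beta_1 z y + \beta_2 y}$, two factors of the form $|\cdot|^{\text{power}}$, and a product of two Bessel-$K$ functions $K_m(\alpha_1|z||y|)K_n(\alpha_2|y|)$. The first move is to dispose of the exponential: expand $\mathrm{e}^{\beta_1 zy}$ and $\mathrm{e}^{\beta_2 y}$ as power series in $y$, which produces the double sum over $i,j$ with coefficients $\beta_1^i\beta_2^j/(i!j!)$ and a factor $z^i y^{i+j}$. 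Interchanging summation and integration (justified by absolute convergence, since $|\beta_i|<\alpha_i$ controls the exponential against the Bessel decay) reduces everything to evaluating, for each $(i,j)$, an integral of the shape
\[
\int_{-\infty}^\infty |y|^{m+n+1+i+j}\,K_m(\alpha_1|z|\,|y|)\,K_n(\alpha_2|y|)\,\mathrm{d}y.
\]

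Next I would exploit parity. Splitting the real line into $y>0$ and $y<0$ and using that $K_m$ depends only on $|y|$, the two halves combine to give twice the integral over $(0,\infty)$ when the total power of $y$ is even, and they cancel when it is odd; this is exactly the source of the indicator $a_{ij}=(1+(-1)^{i+j})/2$, which kills the odd $(i+j)$ terms. So for even $i+j$ the task collapses to a single positive-axis integral
\[
\int_0^\infty t^{\,\mu-1} K_m(at)\,K_n(bt)\,\mathrm{d}t,
\]
with $a=\alpha_1|z|$, $b=\alpha_2$, and $\mu = m+n+2+i+j$. This is a classical Weber--Schafheitlin type integral for a product of two Bessel-$K$ functions, whose closed form is a Gaussian hypergeometric ${}_2F_1$; see Gradshteyn--Ryzhik. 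Evaluating it yields the three Gamma functions $\Gamma(m+n+1+\tfrac{i+j}{2})\Gamma(m+1+\tfrac{i+j}{2})\Gamma(n+1+\tfrac{i+j}{2})$ in the numerator, the $\Gamma(m+n+2+i+j)$ in the denominator, and a ${}_2F_1$ whose third argument is $1-\alpha_2^2/(\alpha_1^2 z^2)$.

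I would then assemble the pieces: multiply back the normalising constants $M_{m,\alpha_1,\beta_1}M_{n,\alpha_2,\beta_2}$, track the powers of $z$ coming from $f_X(zy)$ (which contributes $|z|^m$ and the $z^i$ from the exponential expansion) together with the power arising from the closed-form integral, and collect them into the overall prefactor $\gamma_1^{2m+1}\gamma_2^{2n+1}|z|^{-2n-2}$ and the per-term factor $2^{i+j}((i+j)/2)!\,z^{-j}/(\alpha_1^{i+j}\,\cdots)$. The bookkeeping of the $z$-powers and of the constants, rather than any single hard lemma, is where I expect the main difficulty: one must reconcile the $|z|$ picked up in the argument of $K_m$, the $z^i$ from the series, and the $z$-dependence hidden in the Bessel integral's closed form, and verify they combine to the stated $|z|^{-2n-2}z^{-j}$. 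The other point requiring care is rigorously justifying the term-by-term integration and confirming that the resulting series converges for all $z\in\mathbb{R}$, including the behaviour as $z\to0$ and the switch in parameter regimes flagged in the introduction. Once the exponent arithmetic checks out and the Bessel integral is applied, formula (\ref{eq:3}) follows.
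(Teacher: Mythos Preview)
Your proposal is correct and matches the paper's proof essentially line for line: the paper likewise starts from $f_Z(z)=\int|y|f_X(zy)f_Y(y)\,\mathrm{d}y$, expands the two exponential factors as Taylor series, and evaluates the resulting Bessel integral $\int_0^\infty y^{m+n+1+i+j}K_m(\alpha_1 zy)K_n(\alpha_2 y)\,\mathrm{d}y$ via Gradshteyn--Ryzhik~6.576.4 to produce the ${}_2F_1$. The only cosmetic difference is that the paper splits into $I_1=\int_0^\infty$ and $I_2=\int_{-\infty}^0$ \emph{before} expanding, obtaining the $(-1)^{i+j}$ in $I_2$ and then summing, whereas you expand first and invoke parity; the two routes are equivalent and yield the same $a_{ij}$ factor.
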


\begin{remark}When $\beta_1=\beta_2=0$, the PDF $f_Z(z)$ can be expressed as a single hypergeometric function:
\begin{equation}\label{mnzero}f_Z(z)=\frac{\alpha_2^{2n+1}|z|^{-2n-2}\Gamma(m+1)\Gamma(n+1)}{\pi\alpha_1^{2n+1}(m+n+1)\Gamma(m+1/2)\Gamma(n+1/2)}{}_2F_1\bigg(m+n+1,n+1;m+n+2; 1-\frac{\alpha^2_2}{\alpha_1^2 z^2} \bigg),
\end{equation}
in agreement with the formula given in Theorem 2.1 of \cite{nk06}. We note that an application of the formula (\ref{hlog}) yields the following further simplification when $m=n=0$:
\begin{equation}\label{hlog2}f_Z(z)=\frac{2\alpha_1}{\pi^2 \alpha_2}\frac{\log|\alpha_1z/\alpha_2|}{(\alpha_1z/\alpha_2)^2-1}.
\end{equation}
When $\beta_1=0$ the PDF $f_Z(z)$ simplifies to
\begin{align}
f_Z(z) & = \frac{\gamma_2^{2n+1}|z|^{-2n-2}}{\pi\alpha_1^{2n+1}}  \sum^\infty_{k=0} \frac{k!}{(2k)!} \frac{\Gamma(m+n+1+k)\Gamma(m+1+k) \Gamma(n+1+k)}{\Gamma(m+1/2)\Gamma(n+1/2)\Gamma(m+n+2+2k)}  \nonumber
 \\ &\quad \times\bigg(\frac{2\beta_2}{\alpha_1z}\bigg)^{2k} {}_2F_1\bigg(m+n+1+k,n+1+k;m+n+2+2k; 1-\frac{\alpha^2_2}{\alpha_1^2 z^2} \bigg), \nonumber
\end{align}
and
when $\beta_2=0$ the PDF simplifies to
\begin{align}
f_Z(z) & = \frac{\gamma_1^{2m+1}\alpha_2^{2n+1}|z|^{-2n-2}}{\pi\alpha_1^{2m+2n+1}}  \sum^\infty_{k=0} \frac{k!}{(2k)!}    \frac{\Gamma(m+n+1+k)\Gamma(m+1+k) \Gamma(n+1+k)}{\Gamma(m+1/2)\Gamma(n+1/2)\Gamma(m+n+2+2k)}  \nonumber
 \\ &\quad \times\bigg(\frac{2\beta_1}{\alpha_1}\bigg)^{2k} {}_2F_1\bigg(m+n+1+k,n+1+k;m+n+2+2k; 1-\frac{\alpha^2_2}{\alpha_1^2 z^2} \bigg). \nonumber
\end{align}
Observe that the PDF $f_Z(z)$ is symmetric about the origin if $\beta_1=0$ or $\beta_2=0$.

We remark that the exact PDF of the product of correlated normal random variables is expressed as a double sum involving the modified Bessel function of the second kind when both normal random variables have non-zero mean \cite{cui}, and the PDF simplifies to a single infinite series of modified Bessel functions of the second kind when one of the means is zero \cite{cui},
and simply to the VG PDF when both means are zero \cite{gaunt prod}. Thus, the increase in complexity of the PDF of the VG ratio distribution for non-zero skewness parameters $\beta_1,\beta_2$ mirrors the increase in complexity of the product of two correlated normal random variables as one moves from zero means to non-zero means.
\end{remark}

\begin{proof}
We consider the case $z>0$; the case $z<0$ is similar and thus omitted from this proof. The PDF of $Z = X/Y$ for $z>0$ can be expressed as
\begin{align*}
f_Z(z) &= \int^\infty_{-\infty} |y| f_X(yz) f_Y(y) \,\mathrm{d}y =I_1+I_2,
\end{align*}
where
\begin{align}\label{i1}I_1&=M_1 M_2 \int^\infty_{0} y^{m+n+1} \mathrm{e}^{\beta_1 yz} \mathrm{e}^{\beta_2 y} z^m K_m(\alpha_1 yz)  K_n(\alpha_2 y)\, \mathrm{d}y, \\
\label{i2}I_2&= M_1 M_2  \int^0_{-\infty} (-y)^{m+n+1} \mathrm{e}^{\beta_1 yz} \mathrm{e}^{\beta_2 y} z^m K_m(-\alpha_1 yz)  K_n(-\alpha_2 y) \,\mathrm{d}y,
\end{align}
and $M_1:=M_{m,\alpha_1,\beta_1}$ and $M_2:=M_{n,\alpha_2,\beta_2}$ are the normalising constants of the $\mathrm{VG}(m,\alpha_1,\beta_1,0)$ and $\mathrm{VG}(n,\alpha_2,\beta_2,0)$ distributions. On using Taylor's expansion of the exponential function and interchanging integration and summation in the first step, and evaluating the integral in the second step using equation (6.576.4) of \cite{gradshetyn}, we obtain that 
\begin{align}\label{eq:5}
 I_1 & =  M_1 M_2 z^m \sum^\infty_{i=0} \sum^\infty_{j=0} \frac{\beta_1^i z^{i}}{i!}  \frac{\beta_2^j}{j!} \int^\infty_{0}  y^{m+n+1+i+j}  K_m(\alpha_1 yz)  K_n(\alpha_2 y)\, \mathrm{d}y \nonumber
 \\ & = \frac{ M_1 M_2 2^{m+n-1} \alpha_2^n}{\alpha_1^{m+2n+2}z^{2n+2}}  \sum^\infty_{i=0} \sum^\infty_{j=0} \frac{\beta_1^i}{i!}  \frac{\beta_2^j}{j!} \frac{2^{i+j} }{\alpha_1^{i+j}z^j\Gamma(m+n+2+i+j)} \Gamma\Big(m+n+1+\frac{i+j}{2}\Big)   \nonumber
 \\ & \quad\times \Gamma\Big(n+1+\frac{i+j}{2}\Big) \Gamma\Big(m+1+\frac{i+j}{2}\Big) \Gamma\Big(1+\frac{i+j}{2}\Big)  \nonumber
 \\ & \quad\times {}_2F_1\bigg(m+n+1+\frac{i+j}{2},1+n+\frac{i+j}{2};m+n+2+i+j; 1-\frac{\alpha^2_2}{\alpha_1^2 z^2} \bigg).
\end{align}
By a similar calculation, we get that
\begin{align}
\label{eq:4}
I_2 &= \frac{ M_1 M_2  2^{m+n-1} \alpha_2^n}{\alpha_1^{m+2n+2} z^{2n+2}}  \sum^\infty_{i=0} \sum^\infty_{j=0} \frac{ (-1)^{i+j} \beta_1^i\beta_2 ^j}{i!j!} \frac{2^{i+j} }{\alpha_1^{i+j}z^j\Gamma(m+n+2+i+j)} \nonumber
 \\ & \quad\times \Gamma\Big(m+n+1+\frac{i+j}{2}\Big) \Gamma\Big(n+1+\frac{i+j}{2}\Big) \Gamma\Big(m+1+\frac{i+j}{2}\Big) \Gamma\Big(1+\frac{i+j}{2}\Big) \nonumber
 \\ & \quad\times {}_2F_1\bigg(m+n+1+\frac{i+j}{2},n+1+\frac{i+j}{2};m+n+2+i+j; 1-\frac{\alpha^2_2}{\alpha_1^2 z^2} \bigg).
\end{align}
Summing up (\ref{eq:5}) and (\ref{eq:4}) now yields (\ref{eq:3}) for $z>0$.
\end{proof}

Using special properties of the modified Bessel function of the second kind and the hypergeometric function, equivalent forms and elementary formulas for the PDF of $Z=X/Y$ can be derived; we illustrate this in the following Corollary \ref{cor.1} and Proposition \ref{cor.2}. 

\begin{corollary}\label{cor.1}
The PDF (\ref{eq:3}) can be expressed in the equivalent forms:
\begin{align}\label{eq:6}
f_Z(z) & = \frac{\gamma_1^{2m+1}\gamma_2^{2n+1}|z|^{2m}}{\pi\alpha_2^{2m+2n+2}\Gamma(m+1/2)\Gamma(n+1/2)}  \sum^\infty_{i,j=0 } \frac{ \beta_1^i\beta_2 ^j}{i!j!} \frac{2^{i+j} a_{ij}((i+j)/2)!z^{i}  }{\alpha_2^{i+j}  \Gamma(m+n+2+i+j)} \nonumber
 \\ & \quad \times \Gamma\Big(m+n+1+\frac{i+j}{2}\Big)  \Gamma\Big(m+1+\frac{i+j}{2}\Big) \Gamma\Big(n+1+\frac{i+j}{2}\Big)  \nonumber
 \\ & \quad \times {}_2F_1\bigg(m+n+1+\frac{i+j}{2},m+1+\frac{i+j}{2};m+n+2+i+j; 1-\frac{\alpha^2_1z^2}{\alpha_2^2} \bigg), \\
f_Z(z) & =  \frac{\gamma_1^{2m+1}\gamma_2^{2n+1}}{\pi\alpha_1^{2m}\alpha_2^{2n+2}\Gamma(m+1/2)\Gamma(n+1/2)}  \sum^\infty_{i,j=0} \frac{ \beta_1^i\beta_2 ^j}{i!j!} \frac{2^{i+j} a_{ij}((i+j)/2)!)z^{i}  }{\alpha_2^{i+j}  \Gamma(m+n+2+i+j)}  \nonumber
 \\ & \quad\times \Gamma\Big(m+n+1+\frac{i+j}{2}\Big)\Gamma\Big(m+1+\frac{i+j}{2}\Big) \Gamma\Big(n+1+\frac{i+j}{2}\Big) \nonumber
\\ 
\label{eq:7}& \quad\times {}_2F_1\bigg(n+1+\frac{i+j}{2},1+\frac{i+j}{2};m+n+2+i+j; 1-\frac{\alpha^2_1z^2}{\alpha_2^2 } \bigg),
\end{align}
and
\begin{align}\label{eq:8}
f_Z(z) & = \frac{\gamma_1^{2m+1}\gamma_2^{2n+1}z^{-2}}{\pi\alpha_1^{2m+2}\alpha_2^{2n}\Gamma(m+1/2)\Gamma(n+1/2)}  \sum^\infty_{i,j=0} \frac{ \beta_1^i \beta_2^j}{i!j!} \frac{2^{i+j} a_{ij}((i+j)/2)!z^{-j}  }{\alpha_1^{i+j}  \Gamma(m+n+2+i+j)} \nonumber
 \\ & \quad\times \Gamma\Big(m+n+1+\frac{i+j}{2}\Big)\Gamma\Big(m+1+\frac{i+j}{2}\Big) \Gamma\Big(n+1+\frac{i+j}{2}\Big)  \nonumber
 \\ & \quad\times {}_2F_1\bigg(1+\frac{i+j}{2},m+1+\frac{i+j}{2};m+n+2+i+j; 1-\frac{\alpha^2_2}{\alpha_1^2 z^2} \bigg).
\end{align}
Note that (\ref{eq:6}) and (\ref{eq:7}) hold for $-\sqrt{2}\alpha_2/\alpha_1 < z < \sqrt{2}\alpha_2/\alpha_1 $, whilst (\ref{eq:8}) holds for $z < -\alpha_2/(\sqrt{2}\alpha_1)$ or $z>\alpha_2/(\sqrt{2}\alpha_1)$. 
\end{corollary}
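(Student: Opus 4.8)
The plan is to obtain all three forms directly from (\ref{eq:3}) by applying, term by term in the double series, the standard Euler and Pfaff transformations of the Gaussian hypergeometric function (see \cite{olver}) to the ${}_2F_1$ appearing there. Writing $s=(i+j)/2$, the hypergeometric factor in (\ref{eq:3}) is ${}_2F_1(a,b;c;w)$ with
\[
a=m+n+1+s,\qquad b=n+1+s,\qquad c=m+n+2+2s,\qquad w=1-\frac{\alpha_2^2}{\alpha_1^2 z^2},
\]
so that $c-a=1+s$, $c-b=m+1+s$, and $c-a-b=-n$. The computation hinges on two elementary identities, namely $1-w=\alpha_2^2/(\alpha_1^2 z^2)$, which is always positive, and
\[
\frac{w}{w-1}=1-\frac{\alpha_1^2 z^2}{\alpha_2^2},
\]
so that the Pfaff transformations automatically send the argument $1-\alpha_2^2/(\alpha_1^2z^2)$ to the argument $1-\alpha_1^2z^2/\alpha_2^2$ featuring in (\ref{eq:6}) and (\ref{eq:7}), while the Euler transformation leaves the argument unchanged.

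Concretely, I would proceed as follows. Euler's transformation ${}_2F_1(a,b;c;w)=(1-w)^{c-a-b}\,{}_2F_1(c-a,c-b;c;w)$ turns the hypergeometric factor into ${}_2F_1(1+s,m+1+s;c;w)$ and produces the factor $(1-w)^{-n}=(\alpha_1^2z^2/\alpha_2^2)^{n}$; absorbing this into the prefactor and summand of (\ref{eq:3}) yields (\ref{eq:8}). The Pfaff transformation ${}_2F_1(a,b;c;w)=(1-w)^{-a}\,{}_2F_1(a,c-b;c;w/(w-1))$ gives ${}_2F_1(m+n+1+s,m+1+s;c;1-\alpha_1^2z^2/\alpha_2^2)$ together with the factor $(1-w)^{-(m+n+1+s)}$, producing (\ref{eq:6}); and the companion Pfaff transformation ${}_2F_1(a,b;c;w)=(1-w)^{-b}\,{}_2F_1(c-a,b;c;w/(w-1))$ gives ${}_2F_1(1+s,n+1+s;c;1-\alpha_1^2z^2/\alpha_2^2)$ with the factor $(1-w)^{-(n+1+s)}$, producing (\ref{eq:7}) (the first two ${}_2F_1$ parameters appear swapped, which is immaterial by symmetry). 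Note that the three separate Gamma factors and the ratio $((i+j)/2)!/\Gamma(m+n+2+i+j)$ are untouched, since each transformation keeps $c$ fixed and only reshuffles parameters inside the ${}_2F_1$ while emitting a power of $1-w$. Hence the bulk of the work is routine bookkeeping: writing $(1-w)^{\rho}$ as a product of powers of $\alpha_1$, $\alpha_2$ and $z$ (using $2s=i+j$) and checking that, after absorption, the powers collapse to exactly those displayed in (\ref{eq:6})--(\ref{eq:8}).

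The point that needs care, rather than a genuine obstacle, is the sign of $z$: since $1-w=\alpha_2^2/(\alpha_1^2z^2)$ depends on $z$ only through $z^2$, each factor $(1-w)^{\rho}$ contributes a power of $|z|$, whereas the target forms carry powers such as $z^{i}$ and $z^{-j}$. The two are reconciled because the indicator $a_{ij}$ vanishes unless $i+j$ is even, and for the surviving terms $z^{i+j}=|z|^{i+j}$, so the discrepancy between $z$- and $|z|$-powers is always an even power and hence immaterial; this is precisely what allows the forms to hold for all real $z\neq0$, not merely $z>0$. Finally, the stated ranges of validity are just the regions of convergence of the respective series: the condition $|w|<1$ reads $|z|>\alpha_2/(\sqrt2\,\alpha_1)$ for the argument $1-\alpha_2^2/(\alpha_1^2z^2)$ in (\ref{eq:8}), while $|w/(w-1)|<1$ reads $|z|<\sqrt2\,\alpha_2/\alpha_1$ for the argument $1-\alpha_1^2z^2/\alpha_2^2$ in (\ref{eq:6}) and (\ref{eq:7}). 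I expect the only mildly delicate step to be verifying that the exponent bookkeeping is simultaneously consistent in $\alpha_1$, $\alpha_2$ and $z$; everything else follows mechanically from the transformation formulas.
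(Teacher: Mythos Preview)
Your proposal is correct and follows essentially the same route as the paper: the paper's one-line proof simply says to apply the three transformation formulas in equation (9.131.1) of \cite{gradshetyn} to the ${}_2F_1$ in (\ref{eq:3}), and those three formulas are precisely the Euler and the two Pfaff transformations you describe. You have just spelled out the exponent bookkeeping and the parity argument for the $z$-powers that the paper leaves implicit.
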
 

\begin{proof}
Apply the three transformation formulas given in equation (9.131.1) of \cite{gradshetyn} to the hypergeometric functions in the formula (\ref{eq:3}).
\end{proof}

\begin{proposition}\label{cor.2}
Suppose that $m-1/2\geq0$ and $n-1/2\geq0$ are integers. Then, (\ref{eq:3}) can be simplified to the elementary form:
\begin{align}
 f_Z(z) & = \frac{\gamma_1^{2m+1}\gamma_2^{2n+1}|z|^{m-1/2}}{(2\alpha_1)^{m+1/2}(2\alpha_2)^{n+1/2}} \sum^{m-\frac{1}{2}}_{i=0} \sum^{n-\frac{1}{2}}_{j=0}\frac{(m+n-i-j)!}{(m-1/2-i)!(n-1/2-j)!}     \binom{m-1/2+i}{i}  \nonumber
 \\\label{for1} &\quad \times \binom{n-1/2+j}{j}(2\alpha_1 |z|)^{-i} (2 \alpha_2  )^{-j}\bigg(\frac{1}{u_1^{m+n+1-i-j}}+ \frac{1}{u_2^{m+n+1-i-j}}\bigg),
\end{align}
where $u_1 =\alpha_1 |z|+\alpha_2+\beta_1 z+\beta_2$ and $ u_2 = \alpha_1 |z|+\alpha_2-\beta_1 z -\beta_2.$
\end{proposition}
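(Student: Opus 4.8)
The plan is to bypass the series (\ref{eq:3}) and return to the integral representation of $f_Z$, exploiting the fact that for half-integer order the modified Bessel function $K$ collapses to a finite sum of elementary functions. Since $m-1/2$ and $n-1/2$ are non-negative integers, the standard identity $K_{p+1/2}(x)=\sqrt{\pi/(2x)}\,\mathrm{e}^{-x}\sum_{k=0}^{p}\frac{(p+k)!}{k!(p-k)!}(2x)^{-k}$ applies with $p=m-1/2$ and $p=n-1/2$. Substituting this into the VG PDF (\ref{vgpdf}) and using $\Gamma(m+1/2)=(m-1/2)!$ in the normalising constant, one obtains the elementary representation $f_X(x)=\frac{\gamma_1^{2m+1}}{(2\alpha_1)^{m+1/2}(m-1/2)!}\,\mathrm{e}^{\beta_1 x-\alpha_1|x|}\sum_{i=0}^{m-1/2}\frac{(m-1/2+i)!}{i!(m-1/2-i)!}(2\alpha_1)^{-i}|x|^{m-1/2-i}$, together with the analogous formula for $f_Y$. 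These are finite sums, so all subsequent interchanges of summation and integration are trivially justified.

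Next I would insert these into $f_Z(z)=\int_{-\infty}^\infty |y|\,f_X(yz)f_Y(y)\,\mathrm{d}y$ and expand to a finite double sum over $0\le i\le m-1/2$ and $0\le j\le n-1/2$. The key point is that the surviving $y$-integral in each term is purely elementary: the integrand is $|y|^{\,m+n-i-j}$ times an exponential whose rate is piecewise linear in $y$. Splitting the range at the origin and using $|yz|=|y||z|$, a short sign analysis shows that on $(0,\infty)$ the exponent equals $-u_2 y$ while on $(-\infty,0)$ it equals $u_1 y$, where $u_1,u_2$ are exactly the quantities in the statement; both are positive because $|\beta_i|<\alpha_i$. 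Hence each half-line contributes $\int_0^\infty t^{\,m+n-i-j}\mathrm{e}^{-ut}\,\mathrm{d}t=(m+n-i-j)!/u^{\,m+n+1-i-j}$ with $u=u_2$ and $u=u_1$ respectively, which produces the bracketed pair $u_1^{-(m+n+1-i-j)}+u_2^{-(m+n+1-i-j)}$.

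Finally I would assemble the constants. Pulling out $|z|^{m-1/2}$ from $|yz|^{m-1/2-i}=|y|^{m-1/2-i}|z|^{m-1/2-i}$ and combining $(2\alpha_1)^{-i}|z|^{-i}=(2\alpha_1|z|)^{-i}$, the factor $1/(m-1/2)!$ in the normalising constant of $f_X$ merges with $(m-1/2+i)!/[i!(m-1/2-i)!]$ to give $\frac{1}{(m-1/2-i)!}\binom{m-1/2+i}{i}$, and likewise for the $j$-sum; this is precisely the combinatorial coefficient in (\ref{for1}), the remaining $(m+n-i-j)!$ coming from the elementary integrals. Collecting the overall prefactor $\gamma_1^{2m+1}\gamma_2^{2n+1}/[(2\alpha_1)^{m+1/2}(2\alpha_2)^{n+1/2}]$ then yields (\ref{for1}).

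I expect no serious obstacle: once the half-integer reduction is made, everything is a finite, elementary computation and no convergence questions arise. The only points demanding care are the bookkeeping of the normalising constants and, above all, the sign analysis of the exponent on the two half-lines, which is what correctly pairs each region with $u_1$ or $u_2$ and makes the single formula (\ref{for1}) valid for both signs of $z$.
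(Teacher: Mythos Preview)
Your proposal is correct and follows essentially the same route as the paper: return to the integral representation $f_Z(z)=\int_{-\infty}^{\infty}|y|f_X(yz)f_Y(y)\,\mathrm{d}y$ (equivalently, the integrals $I_1$ and $I_2$), replace the half-integer Bessel functions by the finite elementary sum~(\ref{special}), and evaluate the resulting gamma-type integrals $\int_0^\infty t^{a}\mathrm{e}^{-ut}\,\mathrm{d}t=a!/u^{a+1}$ on each half-line. Your sign analysis pairing the two half-lines with $u_1$ and $u_2$ and the bookkeeping of the combinatorial constants are correct.
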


\begin{remark}As noted by \cite{kkp01}, the $\mathrm{VG}(1/2,\alpha,\beta,0)$ distribution corresponds to the asymmetric Laplace distribution. Hence, setting $m=n=1/2$ in (\ref{for1}) yields the PDF of the ratio of independent asymmetric Laplace random variables. In the case $\beta_1=\beta_2=0$, the formula further reduces to the PDF of the ratio of independent Laplace random variables; this distribution is known as the Lomax distribution (see, for example, \cite{jkb}).
\end{remark}

\begin{proof}
Apply the formula (\ref{special})
to the modified Bessel functions of the second kind that appear in the integrands of $I_1$ and $I_2$ given in (\ref{i1}) and (\ref{i2}). Evaluating the resulting integrals using the standard formula $\int_0^\infty x^{a}\mathrm{e}^{-bx}\,\mathrm{d}x=a!b^{-a-1}$ for $a=0,1,2,\ldots$, $b>0$, summing up $I_1+I_2$, and simplifying yields (\ref{for1}).
\end{proof}

\subsection{Cumulative distribution function}

Let $F_Z(z)=\mathbb{P}(Z\leq z)$ denote the CDF of the VG ratio distribution. In the following theorem, we provide an exact formula for the CDF in the case of a ratio of independent symmetric VG random variables ($\beta_1=\beta_2=0$), the same setting as in the work of \cite{nk06}.

\begin{theorem}Let $\beta_1=\beta_2=0$, and $m,n>-1/2$, $\alpha_1,\alpha_2>0$. 
Then, for $z\in\mathbb{R}$,
\begin{align}\label{fgh}F_Z(z)=\frac{1}{2}+\frac{\alpha_1z/\alpha_2}{2\pi\Gamma(m+1/2)\Gamma(n+1/2)}G_{3,3}^{2,3}\bigg(\frac{\alpha_1^2z^2}{\alpha_2^2}\,\bigg|\,{-n,\frac{1}{2},0 \atop m,0,-\frac{1}{2}}\bigg).
\end{align}
\end{theorem}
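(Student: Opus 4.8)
The plan is to reduce to $z>0$ by symmetry, recast the symmetric-case density as a single Meijer $G$-function valid for all $z>0$, and then integrate it using the standard indefinite-integration formula for $G$-functions. Since $\beta_1=\beta_2=0$, the density $f_Z$ is symmetric about the origin (as recorded in the Remark following Theorem \ref{thm1}), so $F_Z(0)=\tfrac12$ and $F_Z(-z)=1-F_Z(z)$; it therefore suffices to prove (\ref{fgh}) for $z>0$. The proposed right-hand side is automatically consistent with this, since the $G$-function depends on $z$ only through the even quantity $\alpha_1^2z^2/\alpha_2^2$, while the prefactor $\alpha_1z/\alpha_2$ is odd. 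For $z>0$ I would write $F_Z(z)=\tfrac12+\int_0^z f_Z(t)\,\mathrm{d}t$.

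Set $w=\alpha_1^2z^2/\alpha_2^2$ and take $f_Z$ from the equivalent form (\ref{eq:6}), which with $\beta_1=\beta_2=0$ reduces to a constant times $|z|^{2m}\,{}_2F_1(m+n+1,m+1;m+n+2;1-w)$. The first key step is to rewrite this as a single Meijer $G$-function in $w$: using the standard correspondence between ${}_2F_1$ with argument $1-w$ and $G_{2,2}^{2,2}$ (see Appendix \ref{appa} and \cite{gradshetyn}), and absorbing the factor $|z|^{2m}=(\alpha_2/\alpha_1)^{2m}w^{m}$ by the Meijer $G$ shift rule (which adds $m$ to every parameter), I expect to obtain $f_Z(z)=K\,G_{2,2}^{2,2}\big(\alpha_1^2z^2/\alpha_2^2\,\big|\,{-n,0\atop m,0}\big)$ for an explicit constant $K$. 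The parameter lists can be pinned down by matching the two local exponents of $w^{m}\,{}_2F_1(\dots;1-w)$ at $w=0$ (namely $m$ and $0$, giving the lower list) and at $w=\infty$ (namely $-1$ and $-n-1$, giving the upper list via $a_k-1$). A useful feature of this representation is that it is valid for all $z>0$, whereas the hypergeometric series in (\ref{eq:6}) converges only for $|z|<\sqrt2\,\alpha_2/\alpha_1$.

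The second key step is the integration. After the substitution $u=t^2$ one has $\int_0^z f_Z(t)\,\mathrm{d}t=\tfrac{K}{2}\int_0^{z^2}u^{-1/2}G_{2,2}^{2,2}\big(\alpha_1^2u/\alpha_2^2\,\big|\,{-n,0\atop m,0}\big)\,\mathrm{d}u$, and the formula
\[
\int_0^x s^{\sigma-1}G_{p,q}^{\,r,\ell}\Big(\eta s\,\Big|\,{(a)\atop(b)}\Big)\,\mathrm{d}s
=x^{\sigma}\,G_{p+1,q+1}^{\,r,\ell+1}\Big(\eta x\,\Big|\,{1-\sigma,(a)\atop(b),-\sigma}\Big)
\]
(with $r,\ell$ the upper indices, written so as not to clash with the parameters $m,n$) applies with $\sigma=\tfrac12$. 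This raises the orders from $G_{2,2}^{2,2}$ to $G_{3,3}^{2,3}$, inserts exactly the new parameter $1-\sigma=\tfrac12$ into the upper list and $-\sigma=-\tfrac12$ into the lower list, and produces the factor $(z^2)^{1/2}=z$; reordering gives the arrays $\{-n,\tfrac12,0\}$ and $\{m,0,-\tfrac12\}$, precisely as in (\ref{fgh}). It then remains to check that $\tfrac{K}{2}$ equals $\alpha_1/\big(2\pi\alpha_2\Gamma(m+\tfrac12)\Gamma(n+\tfrac12)\big)$ by collecting the gamma factors from the hypergeometric-to-$G$ conversion against the normalising constant in (\ref{eq:6}).

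The main obstacle is the parameter and constant bookkeeping: fixing the precise $G_{2,2}^{2,2}$ parameter lists and the constant $K$, tracking the factor $\tfrac12$ from $u=t^2$ together with the shifts $\pm\tfrac12$ from the integration formula, and verifying that the relevant Mellin--Barnes integral converges over the full range $m,n>-1/2$, which is what secures the wider validity asserted in the Introduction. As a final consistency check I would confirm that $z\,G_{3,3}^{2,3}(\alpha_1^2z^2/\alpha_2^2\mid\cdots)\to \pi\alpha_2\Gamma(m+\tfrac12)\Gamma(n+\tfrac12)/\alpha_1$ as $z\to\infty$, which forces $F_Z(\infty)=1$ and so confirms the normalisation of the derived formula.
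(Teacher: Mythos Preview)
Your proposal is correct and follows essentially the same route as the paper: reduce to $z>0$ by symmetry, express the single-hypergeometric PDF as a $G_{2,2}^{2,2}$ via (\ref{2f1m}), absorb the power of $z$ with the shift rule (\ref{meijergidentity}), and then integrate using (\ref{mint}) after the substitution $u=t^2$. The only cosmetic difference is that the paper starts from the form (\ref{mnzero}) with argument $1-\alpha_2^2/(\alpha_1^2z^2)$ and inverts via (\ref{meijergidentity0}), whereas you start from (\ref{eq:6}) with argument $1-\alpha_1^2z^2/\alpha_2^2$, which lands directly on $G_{2,2}^{2,2}\big(\alpha_1^2z^2/\alpha_2^2\,\big|\,{-n,0\atop m,0}\big)$ and saves that one step.
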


\begin{proof}For ease of exposition, we set $\alpha_1=\alpha_2=1$; the general case follows from a simple rescaling. As the PDF is symmetric when $\beta_1=\beta_2=0$, we also just consider the case $z>0$. Using the representation (\ref{2f1m}) of the hypergeometric function in terms of the Meijer $G$-function allows us to write the PDF (\ref{mnzero}) in terms of the Meijer $G$-function:
\begin{equation*}f_Z(z)=C_{m,n}z^{-2n-2}G_{2,2}^{2,2}\bigg(z^{-2}\,\bigg|\,{-m-n,-n \atop 0,-n}\bigg)=C_{m,n}z^{-2n-2}G_{2,2}^{2,2}\bigg(z^{2}\,\bigg|\,{1,n+1 \atop m+n+1,n+1}\bigg), 
\end{equation*} 
where $1/C_{m,n}=\pi\Gamma(m+1/2)\Gamma(n+1/2)$, and we used the relation (\ref{meijergidentity0}) to obtain the second equality. For $z>0$, the CDF of $Z$ is thus given by
\begin{align}F_Z(z)&=\frac{1}{2}+C_{m,n}\int_0^z y^{-2n-2}G_{2,2}^{2,2}\bigg(y^{2}\,\bigg|\,{1,n+1 \atop m+n+1,n+1}\bigg)\,\mathrm{d}y\nonumber\\
&=\frac{1}{2}+\frac{C_{m,n}}{2}\int_0^{z^2} u^{-n-3/2}G_{2,2}^{2,2}\bigg(u\,\bigg|\,{1,n+1 \atop m+n+1,n+1}\bigg)\,\mathrm{d}u\nonumber\\
&=\frac{1}{2}+\frac{C_{m,n}}{2}\bigg[u^{-n-1/2}G_{3,3}^{2,3}\bigg(u\,\bigg|\,{n+\frac{3}{2},1,n+1 \atop m+n+1,n+1,n+\frac{1}{2}}\bigg)\bigg]_0^{z^2}\nonumber\\
&=\frac{1}{2}+\frac{C_{m,n}}{2}\bigg[G_{3,3}^{2,3}\bigg(u\,\bigg|\,{1,\frac{1}{2}-n,\frac{1}{2} \atop m+\frac{1}{2},\frac{1}{2},0}\bigg)\bigg]_0^{z^2}=\frac{1}{2}+\frac{C_{m,n}}{2}G_{3,3}^{2,3}\bigg(z^2\,\bigg|\,{1,\frac{1}{2}-n,\frac{1}{2} \atop m+\frac{1}{2},\frac{1}{2},0}\bigg),\nonumber
\end{align}
where we used the integral formula (\ref{mint}) in the third step; the relation (\ref{meijergidentity}) in the fourth step; and in the final step we used that the Meijer $G$-function in the penultimate equality evaluated at $u=0$ is equal to zero, which is readily deduced from the contour integral representation (\ref{mdef}) of the $G$-function. Finally, we using (\ref{meijergidentity}) yields
\begin{equation}\label{bah}F_Z(z)=\frac{1}{2}+\frac{C_{m,n}}{2}\cdot zG_{3,3}^{2,3}\bigg(z^2\,\bigg|\,{\frac{1}{2},-n,0 \atop m,0,-\frac{1}{2}}\bigg)=\frac{1}{2}+\frac{C_{m,n}}{2}\cdot zG_{3,3}^{2,3}\bigg(z^2\,\bigg|\,{-n,\frac{1}{2},0 \atop m,0,-\frac{1}{2}}\bigg).
\end{equation}
The formula (\ref{bah}) is also readily seen to hold for $z<0$. We therefore arrive at the formula (\ref{fgh}), which conveniently holds for all $z\in\mathbb{R}$
\end{proof}

\subsection{Asymptotic behaviour of the density and tail probabilities, and fractional moments}

The representations of the PDF $f_Z(z)$ given in Theorem \ref{thm1}, Corollary \ref{cor.1} and Proposition \ref{cor.2} are rather complicated and difficult to parse at first inspection. Some insight can be gained from the following propositions. 

\begin{proposition}\label{prop2.50} Let $m,n>-1/2$, $0\leq|\beta_i|<\alpha_i$, $i=1,2$. 

\vspace{2mm}

\noindent{1.} Suppose $m>0$. Then
\begin{equation*}f_Z(0)= \frac{\gamma_1^{2m+1}\gamma_2^{2n+1}}{\pi\alpha_1^{2m}\alpha_2^{2n+2}}\frac{\Gamma(m)\Gamma(n+1)}{\Gamma(m+1/2)\Gamma(n+1/2)}{}_2F_1\bigg(1,n+1;\frac{1}{2}; \frac{\beta^2_2}{\alpha_2^2 } \bigg).
\end{equation*}
\noindent{2.} Suppose $m=0$. Then, as $z\rightarrow0$,
\begin{equation*}f_Z(z)\sim-\frac{2\gamma_1\gamma_2^{2n+1}}{\pi^{3/2}\alpha_2^{2n+2}}\frac{\Gamma(n+1)\log|z|}{\Gamma(n+1/2)}{}_2F_1\bigg(1,n+1;\frac{1}{2}; \frac{\beta^2_2}{\alpha_2^2 } \bigg).
\end{equation*}

\noindent{3.} Suppose $-1/2<m<0$. Then, as $z\rightarrow0$,
\begin{equation*}f_Z(z)\sim\frac{\gamma_1^{2m+1}\gamma_2^{2n+1}}{\alpha_2^{2m+2n+2}\sin(-m\pi)}\frac{\Gamma(m+n+1)|z|^{2m}}{\Gamma(m+1/2)\Gamma(n+1/2)}{}_2F_1\bigg(m+1,m+n+1;\frac{1}{2}; \frac{\beta^2_2}{\alpha_2^2 } \bigg).
\end{equation*}
\end{proposition}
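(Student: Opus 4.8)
The plan is to read off all three regimes from the single representation (\ref{eq:7}), whose hypergeometric argument $1-\alpha_1^2z^2/\alpha_2^2$ tends to $1$ as $z\to0$ and which, by Corollary \ref{cor.1}, is valid on the whole neighbourhood $|z|<\sqrt2\,\alpha_2/\alpha_1$ of the origin, including $z=0$. Writing the hypergeometric parameters of the $(i,j)$ summand as $p=n+1+\tfrac{i+j}{2}$, $q=1+\tfrac{i+j}{2}$ and $r=m+n+2+i+j$, the crucial observation is that $r-p-q=m$ for \emph{every} pair $(i,j)$. Hence the behaviour of each summand near $z=0$ is governed by the classical trichotomy for ${}_2F_1(p,q;r;w)$ as $w\to1^-$ according to the sign of $r-p-q=m$, which is precisely the trichotomy in the statement.

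For part 1 ($m>0$) I would simply put $z=0$ in (\ref{eq:7}): the factor $z^i$ annihilates every term with $i\geq1$, and the argument becomes exactly $1$. Since $r-p-q=m>0$, Gauss's summation theorem gives ${}_2F_1(p,q;r;1)=\Gamma(r)\Gamma(m)/[\Gamma(r-p)\Gamma(r-q)]$ with $r-p=m+1+\tfrac{i+j}{2}$ and $r-q=m+n+1+\tfrac{i+j}{2}$. The products $\Gamma(m+1+\tfrac{i+j}{2})$ and $\Gamma(m+n+1+\tfrac{i+j}{2})$ then cancel against the corresponding Gamma factors in (\ref{eq:7}), and $\Gamma(m+n+2+i+j)$ cancels as well. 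Writing $j=2k$ (the indicator $a_{ij}$ forces $j$ even when $i=0$), the surviving series reduces, via $(1)_k=k!$ and $4^kk!/(2k)!=1/(1/2)_k$, to $\Gamma(n+1)\,{}_2F_1(1,n+1;\tfrac12;\beta_2^2/\alpha_2^2)$, which is the asserted value of $f_Z(0)$.

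Parts 2 and 3 are the same computation run with the appropriate $w\to1^-$ expansion in place of Gauss's theorem. For $m=0$ I would use the logarithmic limit ${}_2F_1(p,q;p+q;w)\sim-\Gamma(p+q)\log(1-w)/[\Gamma(p)\Gamma(q)]$ together with $\log(1-\alpha_1^2z^2/\alpha_2^2)=2\log|z|+O(1)$; for $-1/2<m<0$ I would use the standard connection formula at $w=1$, whose singular part $\Gamma(r)\Gamma(-m)(1-w)^m/[\Gamma(p)\Gamma(q)]$ dominates the regular part because $m<0$, with $(1-\alpha_1^2z^2/\alpha_2^2)^m=(\alpha_1/\alpha_2)^{2m}|z|^{2m}$. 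In both cases only the $i=0$ terms contribute at leading order, since for $i\geq1$ the extra $z^i$ renders the contribution $o(\log|z|)$, respectively $o(|z|^{2m})$ (here $i+2m>0$ as $m>-1/2$). The identical Gamma cancellation and the reindexing $j=2k$ then produce $\Gamma(n+1)\,{}_2F_1(1,n+1;\tfrac12;\beta_2^2/\alpha_2^2)$ in part 2 and $\Gamma(m+1)\Gamma(m+n+1)\,{}_2F_1(m+1,m+n+1;\tfrac12;\beta_2^2/\alpha_2^2)$ in part 3; finally, for part 3 I would convert the constant $\Gamma(-m)\Gamma(m+1)/\pi$ into $1/\sin(-m\pi)$ by Euler's reflection formula $\Gamma(-m)\Gamma(1+m)=\pi/\sin(-m\pi)$, yielding the stated form.

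The main obstacle is the interchange of the limit $z\to0$ with the infinite double summation in parts 2 and 3: I must verify that the leading singular behaviour can genuinely be extracted termwise, and that both the subleading parts of each hypergeometric expansion and the entire $i\geq1$ block sum to a quantity of strictly smaller order than $\log|z|$ (resp. $|z|^{2m}$). This requires a uniform bound on the tail of the series for $z$ near $0$, most naturally supplied by a dominated-convergence argument anchored on the convergence of (\ref{eq:7}) throughout $|z|<\sqrt2\,\alpha_2/\alpha_1$; by contrast, the algebraic simplifications in each of the three cases are routine once the $w\to1$ asymptotics are inserted.
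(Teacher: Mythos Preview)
Your approach is correct and essentially identical to the paper's: both work from representation (\ref{eq:7}), exploit that $c-a-b=m$ in every summand, apply Gauss's evaluation (\ref{f1}) for $m>0$ and the limiting forms (\ref{f2}), (\ref{f3}) for $m=0$ and $-1/2<m<0$, retain only the $i=0$ terms, and finish with the reflection formula in part~3. One minor slip: where you write $(1-\alpha_1^2z^2/\alpha_2^2)^m=(\alpha_1/\alpha_2)^{2m}|z|^{2m}$ you mean $(1-w)^m=(\alpha_1^2z^2/\alpha_2^2)^m$, since $w=1-\alpha_1^2z^2/\alpha_2^2$; your right-hand side is the correct value.
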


\begin{proposition}\label{prop2.5} Let $m,n>-1/2$, $0\leq|\beta_i|<\alpha_i$, $i=1,2$. 

\vspace{2mm}

\noindent{1.} Suppose $n>0$. Then, as $|z|\rightarrow\infty$,
\begin{equation}\label{l1}f_Z(z)\sim \frac{\gamma_1^{2m+1}\gamma_2^{2n+1}}{\pi\alpha_1^{2m+2}\alpha_2^{2n}}\frac{\Gamma(m+1)\Gamma(n)z^{-2}}{\Gamma(m+1/2)\Gamma(n+1/2)}{}_2F_1\bigg(1,m+1;\frac{1}{2}; \frac{\beta^2_1}{\alpha_1^2 } \bigg).
\end{equation}

\noindent{2.} Suppose $n=0$. Then, as $|z|\rightarrow\infty$,
\begin{equation}\label{l2}f_Z(z)\sim \frac{2\gamma_1^{2m+1}\gamma_2}{\pi^{3/2}\alpha_1^{2m+2}}\frac{\Gamma(m+1)z^{-2}\log|z|}{\Gamma(m+1/2)}{}_2F_1\bigg(1,m+1;\frac{1}{2}; \frac{\beta^2_1}{\alpha_1^2 } \bigg).
\end{equation}

\noindent{3.} Suppose $-1/2<n<0$. Then, as $|z|\rightarrow\infty$,
\begin{equation}\label{l3}f_Z(z)\sim\frac{\gamma_1^{2m+1}\gamma_2^{2n+1}}{\alpha_1^{2m+2n+2}\sin(-n\pi)}\frac{\Gamma(m+n+1)|z|^{-2-2n}}{\Gamma(m+1/2)\Gamma(n+1/2)}{}_2F_1\bigg(n+1,m+n+1;\frac{1}{2}; \frac{\beta^2_1}{\alpha_1^2 } \bigg).
\end{equation}
\end{proposition}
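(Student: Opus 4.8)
The plan is to read off the large-$|z|$ behaviour directly from the representation (\ref{eq:8}) of the density, which is precisely the form valid for $|z|>\alpha_2/(\sqrt2\,\alpha_1)$. As $|z|\to\infty$ the argument $w=1-\alpha_2^2/(\alpha_1^2 z^2)$ of every hypergeometric function in (\ref{eq:8}) tends to $1^-$, so the whole question reduces to the behaviour of ${}_2F_1(a,b;c;w)$ near its singular point $w=1$. The key structural observation is that for the term indexed by $(i,j)$ one has $a=1+(i+j)/2$, $b=m+1+(i+j)/2$ and $c=m+n+2+i+j$, whence $c-a-b=n$ for \emph{every} $(i,j)$. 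Thus the trichotomy $n>0$, $n=0$, $-1/2<n<0$ in the statement is exactly the trichotomy $c-a-b>0$, $c-a-b=0$, $c-a-b<0$ that governs the limiting behaviour of ${}_2F_1$ at $w=1$.

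First I would isolate the dominant terms. Since $c-a-b=n$ is independent of $(i,j)$, the standard limiting formulas (Gauss's summation when $n>0$, the logarithmic singularity when $n=0$, and the $(1-w)^{n}$ singularity when $n<0$; see, e.g., \cite{gradshetyn}) show that each ${}_2F_1$ factor contributes the same $|z|$-dependence --- a constant, $\log|z|$, or $|z|^{-2n}$ respectively --- regardless of $(i,j)$. Combined with the explicit factor $z^{-j}$ in (\ref{eq:8}), this means the terms with $j\geq1$ are suppressed by at least one power of $|z|$ relative to the $j=0$ terms, so only $j=0$ survives at leading order; the constraint $a_{ij}=1$ then forces $i=2k$ to be even. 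Extracting this $j=0$, $i=2k$ slice reduces the double series to a single sum over $k$, and since $z^{-2}=|z|^{-2}$ the result is insensitive to the sign of $z$.

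The three cases are then finished by inserting the appropriate $w\to1^-$ asymptotics and resumming. In each case there is a substantial cancellation of the gamma prefactors against the gamma factors produced by the limit of ${}_2F_1$, and the identity $2^{2k}k!/(2k)!=1/(\tfrac12)_k$ recasts the surviving sum over $k$ as a Gaussian hypergeometric series in $\beta_1^2/\alpha_1^2$. For $n>0$ the limiting constant $\Gamma(c)\Gamma(n)/(\Gamma(c-a)\Gamma(c-b))$ leaves $\Gamma(m+1)\Gamma(n)\,{}_2F_1(1,m+1;\tfrac12;\beta_1^2/\alpha_1^2)$, giving (\ref{l1}); for $n=0$ the factor $-\log(1-w)\sim2\log|z|$ produces the extra $\log|z|$, while $\Gamma(n+1/2)=\Gamma(\tfrac12)=\sqrt\pi$ accounts for the $\pi^{3/2}$, giving (\ref{l2}). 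For $-1/2<n<0$ the singular factor $(1-w)^{n}=(\alpha_2^2/\alpha_1^2)^{n}|z|^{-2n}$ supplies the $|z|^{-2-2n}$ growth and, after a slightly different cancellation, the sum resums to $\Gamma(m+n+1)\Gamma(n+1)\,{}_2F_1(n+1,m+n+1;\tfrac12;\beta_1^2/\alpha_1^2)$ with an overall factor $\Gamma(-n)$; the reflection formula $\Gamma(-n)\Gamma(n+1)=\pi/\sin(-n\pi)$ then converts $\Gamma(-n)\Gamma(n+1)/\pi$ into $1/\sin(-n\pi)$ and yields (\ref{l3}).

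I expect the main obstacle to be rigorously justifying the interchange of the limit $|z|\to\infty$ with the infinite summation, together with the attendant claim that the $j\geq1$ terms are genuinely negligible. This calls for a dominated-convergence or uniform-tail estimate on the terms of (\ref{eq:8}): one must bound the ${}_2F_1$ factors uniformly for $w$ in a left-neighbourhood of $1$ and control the resulting series in $k$, using $0\leq|\beta_i|<\alpha_i$ to guarantee convergence of the limiting hypergeometric series in $\beta_1^2/\alpha_1^2$. By contrast, the algebraic cancellations and the resummation to a single ${}_2F_1$ are routine once the correct $w\to1^-$ expansion has been substituted.
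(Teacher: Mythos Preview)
Your proposal is correct and follows exactly the route the paper takes: the paper's proof simply says it is ``similar to that of Proposition \ref{prop2.50}, except we make use of the representation (\ref{eq:8}) of the PDF $f_Z(z)$ instead of the representation (\ref{eq:7}),'' and the proof of Proposition \ref{prop2.50} is precisely the argument you outline --- let the hypergeometric argument tend to $1^-$, apply the three limiting formulas (\ref{f1}), (\ref{f2}), (\ref{f3}) according to the sign of $c-a-b$, cancel gamma factors, and resum via $(u)_k=\Gamma(u+k)/\Gamma(u)$ (equivalently your $2^{2k}k!/(2k)!=1/(\tfrac12)_k$) into a single ${}_2F_1$ in $\beta_1^2/\alpha_1^2$, with the reflection formula $\Gamma(-n)\Gamma(n+1)=\pi/\sin(-n\pi)$ used in case 3. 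Your observation that $c-a-b=n$ for every $(i,j)$, so that the $z^{-j}$ prefactor forces $j=0$ (hence $i=2k$) to dominate, is the exact mechanism at work and is more explicit than the paper's exposition.
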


\begin{corollary}\label{cor2.7}Let $m,n>-1/2$, $0\leq|\beta_i|<\alpha_i$, $i=1,2$. Let $\bar{F}_Z(z)=\mathbb{P}(Z>z)$.

\vspace{2mm}

\noindent{1.} Suppose $n>0$. Then, as $z\rightarrow\infty$,
\begin{equation}\label{m1}\bar{F}_Z(z)\sim \frac{\gamma_1^{2m+1}\gamma_2^{2n+1}}{\pi\alpha_1^{2m+2}\alpha_2^{2n}}\frac{\Gamma(m+1)\Gamma(n)z^{-1}}{\Gamma(m+1/2)\Gamma(n+1/2)}{}_2F_1\bigg(1,m+1;\frac{1}{2}; \frac{\beta^2_1}{\alpha_1^2 } \bigg).
\end{equation}

\noindent{2.} Suppose $n=0$. Then, as $z\rightarrow\infty$,
\begin{equation*}\bar{F}_Z(z)\sim \frac{2\gamma_1^{2m+1}\gamma_2}{\pi^{3/2}\alpha_1^{2m+2}}\frac{\Gamma(m+1)z^{-1}\log(z)}{\Gamma(m+1/2)}{}_2F_1\bigg(1,m+1;\frac{1}{2}; \frac{\beta^2_1}{\alpha_1^2 } \bigg).
\end{equation*}

\noindent{3.} Suppose $-1/2<n<0$. Then, as $z\rightarrow\infty$,
\begin{equation*}\bar{F}_Z(z)\sim\frac{\gamma_1^{2m+1}\gamma_2^{2n+1}}{\alpha_1^{2m+2n+2}\sin(-n\pi)}\frac{\Gamma(m+n+1)z^{-1-2n}}{(1+2n)\Gamma(m+1/2)\Gamma(n+1/2)}{}_2F_1\bigg(n+1,m+n+1;\frac{1}{2}; \frac{\beta^2_1}{\alpha_1^2 } \bigg).
\end{equation*}
\end{corollary}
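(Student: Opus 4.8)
The plan is to derive all three tail estimates by integrating the density asymptotics already established in Proposition \ref{prop2.5}. Since $\bar{F}_Z(z)=\int_z^\infty f_Z(t)\,\mathrm{d}t$, and Proposition \ref{prop2.5} pins down the leading-order behaviour of $f_Z(t)$ as $t\to\infty$ in each of the three regimes $n>0$, $n=0$, and $-1/2<n<0$, the natural approach is to replace $f_Z$ by its leading asymptotic term inside the integral and evaluate the resulting elementary integral. First I would observe that in every case the density is integrable at infinity: the leading term is of order $t^{-2}$ when $n\geq0$ (with an extra factor $\log t$ when $n=0$), and of order $t^{-2-2n}$ with $-2-2n<-1$ when $-1/2<n<0$, so the tail integral converges and tends to $0$ as $z\to\infty$. (This integrability boundary is, incidentally, exactly what forces the mean to be undefined.)

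The one step that needs justification is the passage from $f_Z(t)\sim g(t)$ to $\int_z^\infty f_Z(t)\,\mathrm{d}t\sim\int_z^\infty g(t)\,\mathrm{d}t$, where $g$ denotes the relevant leading term from Proposition \ref{prop2.5}. I would handle this with a standard integration-of-asymptotics argument. Note first that $g(t)>0$ for all large $t$, since the constants multiplying the powers of $t$ in Proposition \ref{prop2.5} are positive; in Case 3 this uses that $\sin(-n\pi)>0$ for $-1/2<n<0$, that $\Gamma(m+n+1)>0$ as $m+n+1>0$, and that the Gaussian hypergeometric function is a sum of positive terms since its argument lies in $[0,1)$ and its parameters are positive. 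Given $\varepsilon>0$, there is then a $T$ with $(1-\varepsilon)g(t)\leq f_Z(t)\leq(1+\varepsilon)g(t)$ for $t>T$; integrating over $(z,\infty)$ for $z>T$ and letting $\varepsilon\downarrow0$ yields $\bar{F}_Z(z)\sim\int_z^\infty g(t)\,\mathrm{d}t$. Equivalently, since each $g$ is regularly varying, this is precisely the monotone density theorem.

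It then remains to compute $\int_z^\infty g(t)\,\mathrm{d}t$ and extract the leading term in each case. For $n>0$, $\int_z^\infty t^{-2}\,\mathrm{d}t=z^{-1}$, and multiplying by the constant in Proposition \ref{prop2.5} gives (\ref{m1}). For $n=0$, integration by parts gives $\int_z^\infty t^{-2}\log t\,\mathrm{d}t=z^{-1}\log z+z^{-1}\sim z^{-1}\log z$, producing the stated $z^{-1}\log z$ asymptotic. For $-1/2<n<0$, $\int_z^\infty t^{-2-2n}\,\mathrm{d}t=z^{-1-2n}/(1+2n)$, which accounts for the factor $1/(1+2n)$ in the third formula. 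The main (if modest) obstacle is therefore the term-by-term integration of the density asymptotics, which is exactly where the positivity of the leading coefficients and the monotone-density (Karamata-type) argument enter; the remaining computations are routine evaluations of elementary integrals.
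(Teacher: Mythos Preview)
Your proposal is correct and follows essentially the same approach as the paper: integrate the density asymptotics of Proposition \ref{prop2.5} using $\int_z^\infty t^{-2}\,\mathrm{d}t=z^{-1}$, $\int_z^\infty t^{-2}\log t\,\mathrm{d}t\sim z^{-1}\log z$, and $\int_z^\infty t^{-2-2n}\,\mathrm{d}t=z^{-1-2n}/(1+2n)$. In fact you are more careful than the paper, which simply asserts $\bar{F}_Z(z)\sim C\int_z^\infty x^{-2}\,\mathrm{d}x$ without spelling out the $\varepsilon$-argument (or Karamata-type reasoning) that you supply to justify integrating the asymptotic relation term by term.
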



\begin{remark}Since ${}_2F_1(a,b;c;0)=1$, the formulas of Proposition \ref{prop2.50} simplify when $\beta_2=0$, whilst the formulas of Proposition \ref{prop2.5} and Corollary \ref{cor2.7} simplify when $\beta_1=0$.
\end{remark}

\begin{remark}\label{rrrr}Proposition \ref{prop2.50} tells us that, as $z\rightarrow0$, the PDF $f_Z(z)$ is bounded for $m>0$, but has a singularity for $-1/2<m\leq0$. Note that whether the density is bounded or unbounded is determined solely by the parameter $m$. We remark that this behaviour is consistent with the behaviour of the VG density $f_X(x)$, which is bounded for all $x\in\mathbb{R}$ if $m>0$, but has a singularity as $x\rightarrow0$ if $-1/2<m\leq0$ 
(see equation (2.4) of \cite{gaunt vg3}).

The asymptotic behaviour of the PDF as $z\rightarrow0$ shows that the VG ratio distribution is unimodal if $-1/2<m\leq0$, as in this case the density is bounded everywhere except for the singularity at $z=0$. Also, the VG ratio distribution was shown to be unimodal in the $\beta_1=\beta_2=0$ case by \cite{nk06} (they proved it for $m,n\geq1$, although their argument also applies for $m,n>-1/2$). 
Our numerical tests strongly suggest that the VG ratio distribution is unimodal for all parameter values, although we have been unable to prove this. 

We see from Proposition \ref{prop2.5} that the tails of the VG ratio distribution become heavier as $n\leq0$ decreases; note that now the rate of decay of the tails with respect to $z$ is solely determined by the parameter $n$. We remark that it is curious that, whilst the VG ratio distribution is only symmetric around the origin if $\beta_1=0$ or if $\beta_2=0$, for all parameter values we have $\lim_{z\rightarrow\infty}f_Z(z)/f_Z(-z)=1$.  This is in contrast to the VG distribution itself, for which $\lim_{x\rightarrow\infty}f_X(x)/f_X(-x)=1$ if and only if $\beta=0$ (see \cite[p.\ 4]{gaunt vg3}).

It follows from Proposition \ref{prop2.5} that, for all possible parameter values, the mean of the VG ratio distribution is undefined. This is a common feature of ratio distributions; for example, it is well-known that the ratio of independent standard normal random variables follows the Cauchy distribution, for which the mean does not exist. Whilst the mean of the VG ratio distribution is not defined, the fractional moments $\mathbb{E}[|Z|^k]$ do exist for suitable $k<1$, as seen in Proposition \ref{prop2.7} below.
\end{remark}

The PDF of the VG ratio distribution is plotted for a range of parameter values in Figures \ref{fig1} and \ref{fig3}. Figure \ref{fig1} demonstrates the effect of varying the shape parameters $m$ and $n$, whilst Figure \ref{fig3} shows the effect of varying the skewness parameters $\beta_1$ and $\beta_2$. 
In both figures, $\alpha_1=\alpha_2=1$. 
The figures confirm the assertions made in Remark \ref{rrrr}.


\captionsetup{font=footnotesize}

\begin{figure}\label{fig1}
\centering
\includegraphics[scale=0.47]{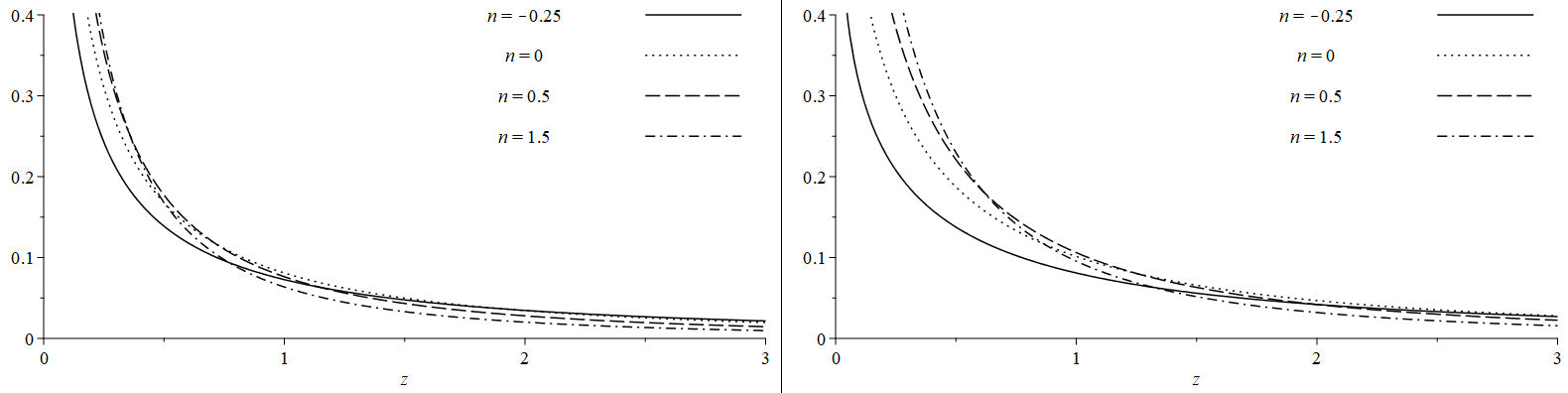}
\caption{$\beta_1=\beta_2=0$, (a): $m=-0.25$; (b): $m=0$.}
  \label{fig1}
  \end{figure}

  \begin{figure}\label{fig3}
\centering
\includegraphics[scale=0.5]{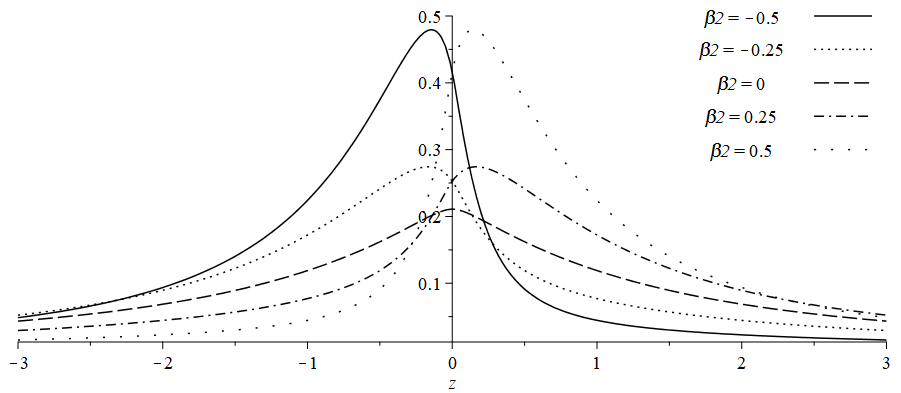}
\caption{$m=n=1.5$, $\beta_1=0.5$.}
\label{fig3}
\end{figure}


\vspace{3mm}

\noindent{\emph{Proof of Proposition \ref{prop2.50}.}} Suppose first that $m>0$. We shall make use of the representation (\ref{eq:7}) of the PDF $f_Z(z)$. Let
\[C=\frac{\gamma_1^{2m+1}\gamma_2^{2n+1}}{\pi\alpha_1^{2m}\alpha_2^{2n+2}\Gamma(m+1/2)\Gamma(n+1/2)}.\]
Then
\begin{align}\lim_{z \rightarrow 0 } f_Z(z) & =C\sum^\infty_{k=0} \bigg(\frac{2\beta_2}{\alpha_2}\bigg)^{2k}\frac{k!\Gamma(m+n+1+k)\Gamma(m+1+k)\Gamma(n+1+k)}{(2k)!\Gamma(m+n+2+2k)}\nonumber\\
\label{zeron} &\quad\times {}_2F_1(n+1+k,1+k;m+n+2+2k; 1).
\end{align}
Using the formula (\ref{f1}) to express the hypergeometric functions in (\ref{zeron}) in terms of the gamma function, and cancelling the gamma functions gives that
\begin{align*}\lim_{z \rightarrow 0 } f_Z(z) &=C\Gamma(m)\sum_{k=0}^\infty\frac{k!}{(2k)!}\Gamma(n+1+k)\bigg(\frac{2\beta_2}{\alpha_2}\bigg)^{2k}\\
&=C\Gamma(m)\Gamma(n+1){}_2F_1\bigg(1,n+1;\frac{1}{2}; \frac{\beta^2_2}{\alpha_2^2 } \bigg),
\end{align*}
where we obtained the second equality by using the basic formula $(u)_k=\Gamma(u+k)/\Gamma(u)$
to put the infinite series into the hypergeometric form (\ref{gauss}). 

The cases $m=0$ and $-1/2<m<0$ are treated similarly, but instead of using the equality (\ref{f1}) we apply the limiting forms (\ref{f2}) and (\ref{f3}) for the hypergeometric function, respectively. In the case $-1/2<m<0$, we simplify our limiting form for $f_Z(z)$ by applying the standard formula $\Gamma(-m)\Gamma(m+1)=\pi/\sin(-m\pi)$. \hfill $\Box$

\vspace{3mm}

\noindent{\emph{Proof of Proposition \ref{prop2.5}.}} The proof is similar to that of Proposition \ref{prop2.50}, except we make use of the representation (\ref{eq:8}) of the PDF $f_Z(z)$ instead of the representation (\ref{eq:7}). \hfill $\Box$

\vspace{3mm}

\noindent{\emph{Proof of Corollary \ref{cor2.7}.}} Suppose first that $n>0$. Then, by (\ref{l1}), as $z\rightarrow\infty$,
\begin{align*}\bar{F}_Z(z)\sim \frac{\gamma_1^{2m+1}\gamma_2^{2n+1}}{\pi\alpha_1^{2m+2}\alpha_2^{2n}}\frac{\Gamma(m+1)\Gamma(n)}{\Gamma(m+1/2)\Gamma(n+1/2)}{}_2F_1\bigg(1,m+1;\frac{1}{2}; \frac{\beta^2_1}{\alpha_1^2 } \bigg)\int_z^\infty x^{-2}\,\mathrm{d}x,
\end{align*} 
and evaluating $\int_z^\infty x^{-2}\,\mathrm{d}x=z^{-1}$ yields (\ref{m1}). The cases $n=0$ and $-1/2<n<0$ are dealt with similarly making use of the limiting forms (\ref{l2}) and (\ref{l3}) and the formulas $\int_z^\infty x^{-2}\log(x)\,\mathrm{d}x=z^{-1}(\log(z)+1)\sim z^{-1}\log(z)$, as $z\rightarrow\infty$, and $\int_z^\infty x^{-2-2n}\,\mathrm{d}x=z^{-1-2n}/(1+2n)$. \hfill $\Box$




\begin{proposition}\label{prop2.7}Let $m,n>-1/2$, $0\leq|\beta_i|<\alpha_i$, $i=1,2$. Then, for $\mathrm{max}\{-1,-2m-1\}<k<\mathrm{min}\{1,2n+1\}$,
\begin{align*}
 \mathbb{E}[|Z|^k] & =  \frac{ \alpha_2^{k} (1-\beta_1^2/\alpha_1^2)^{m+1/2} (1-\beta_2^2/\alpha_2^2)^{n+1/2}  }{  \alpha_1^k\cos ( k\pi/2 ) \Gamma(m+1/2) \Gamma(n+1/2) } \Gamma\Big(m+\frac{k+1}{2}\Big)\Gamma\Big(n+\frac{1-k}{2}\Big) 
 \\ &  \quad\times {}_2F_1\bigg(\frac{k+1}{2},m+\frac{k+1}{2};\frac{1}{2};\frac{\beta^2_1}{\alpha^2_1}\bigg) {}_2F_1\bigg(\frac{1-k}{2},n+\frac{1-k}{2};\frac{1}{2};\frac{\beta^2_2}{\alpha^2_2}\bigg).
\end{align*}
\end{proposition}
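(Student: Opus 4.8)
The plan is to exploit the independence of $X$ and $Y$, which factorises the fractional moment as $\mathbb{E}[|Z|^k]=\mathbb{E}[|X|^k]\,\mathbb{E}[|Y|^{-k}]$, and then to compute the fractional moments of a single VG random variable directly from its density (\ref{vgpdf}). For $X\sim\mathrm{VG}(m,\alpha_1,\beta_1,0)$ I would write $\mathbb{E}[|X|^k]=M_1\int_{-\infty}^\infty \mathrm{e}^{\beta_1 x}|x|^{m+k}K_m(\alpha_1|x|)\,\mathrm{d}x$ with $M_1=M_{m,\alpha_1,\beta_1}$, and split the integral at the origin; the substitution $x\mapsto-x$ on $(-\infty,0)$ combines the two halves into $2M_1\int_0^\infty\cosh(\beta_1 x)\,x^{m+k}K_m(\alpha_1 x)\,\mathrm{d}x$. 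Expanding $\cosh(\beta_1 x)=\sum_{j\geq0}(\beta_1 x)^{2j}/(2j)!$ and interchanging summation and integration (justified by Tonelli, since every term is nonnegative) reduces the problem to the standard Bessel moment integral $\int_0^\infty x^{s-1}K_m(\alpha_1 x)\,\mathrm{d}x=2^{s-2}\alpha_1^{-s}\Gamma\big(\tfrac{s-m}{2}\big)\Gamma\big(\tfrac{s+m}{2}\big)$ (equation (6.561.16) of \cite{gradshetyn}), applied with $s=m+k+2j+1$.

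Each term then contributes a product $\Gamma\big(\tfrac{k+1}{2}+j\big)\Gamma\big(m+\tfrac{k+1}{2}+j\big)$. Using $(2j)!=4^j j!(1/2)_j$ and $\Gamma(a+j)=\Gamma(a)(a)_j$, I would recast the resulting series in Gaussian hypergeometric form and, after substituting $M_1$ and writing $\gamma_1^{2m+1}=\alpha_1^{2m+1}(1-\beta_1^2/\alpha_1^2)^{m+1/2}$, obtain
\begin{equation*}\mathbb{E}[|X|^k]=\frac{2^k\alpha_1^{-k}(1-\beta_1^2/\alpha_1^2)^{m+1/2}}{\sqrt{\pi}\,\Gamma(m+1/2)}\Gamma\Big(\tfrac{k+1}{2}\Big)\Gamma\Big(m+\tfrac{k+1}{2}\Big){}_2F_1\Big(\tfrac{k+1}{2},m+\tfrac{k+1}{2};\tfrac12;\tfrac{\beta_1^2}{\alpha_1^2}\Big).\end{equation*}
The Bessel integral converges only when $\mathrm{Re}(s)>|m|$, i.e.\ $m+k+1>|m|$, which is exactly $k>\max\{-1,-2m-1\}$. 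Replacing $(m,\alpha_1,\beta_1,k)$ by $(n,\alpha_2,\beta_2,-k)$ yields the companion formula for $\mathbb{E}[|Y|^{-k}]$, whose convergence condition $-k>\max\{-1,-2n-1\}$ is precisely $k<\min\{1,2n+1\}$; together these pin down the stated admissible range of $k$.

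Finally I would multiply the two expressions. The factors $2^k2^{-k}=1$ cancel, the powers of the $\alpha_i$ combine to $\alpha_2^k/\alpha_1^k$, and the two $1/\sqrt{\pi}$ factors multiply against $\Gamma\big(\tfrac{k+1}{2}\big)\Gamma\big(\tfrac{1-k}{2}\big)$; the reflection formula $\Gamma(z)\Gamma(1-z)=\pi/\sin(\pi z)$ with $z=\tfrac{k+1}{2}$ converts this into $\pi/\cos(k\pi/2)$, so that $\tfrac1\pi\Gamma\big(\tfrac{k+1}{2}\big)\Gamma\big(\tfrac{1-k}{2}\big)=1/\cos(k\pi/2)$. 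Collecting the surviving gamma factors $\Gamma\big(m+\tfrac{k+1}{2}\big)\Gamma\big(n+\tfrac{1-k}{2}\big)$ together with the two hypergeometric functions then produces the claimed formula. I expect the only delicate points to be the careful bookkeeping of the convergence condition $\mathrm{Re}(s)>|m|$ (and its analogue for $Y$), which is what determines the interval for $k$, and the correct identification of the power series as a ${}_2F_1$; the interchange of sum and integral and the subsequent algebraic simplification should be routine.
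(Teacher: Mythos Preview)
Your proposal is correct and follows essentially the same route as the paper: factor $\mathbb{E}[|Z|^k]=\mathbb{E}[|X|^k]\,\mathbb{E}[|Y|^{-k}]$ by independence, apply the VG absolute moment formula (\ref{mom1}), and simplify via $\Gamma\big(\tfrac{k+1}{2}\big)\Gamma\big(\tfrac{1-k}{2}\big)=\pi\sec(k\pi/2)$. The only difference is that you re-derive (\ref{mom1}) from scratch using the Mellin-type Bessel integral, whereas the paper simply cites it from \cite{gauntvgmom}; your derivation is sound and the convergence analysis pinning down the admissible range of $k$ is accurate.
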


We will need the following formula of \cite{gauntvgmom} for the absolute moments of the $\mathrm{VG}(m,\alpha,\beta,0)$ distribution. 
Then, for $k>\mathrm{max}\{-1,-2m-1\}$,
\begin{equation}
\label{mom1}
\mathbb{E}[|X|^k]=\frac{2^k(1-\beta^2/\alpha^2)^{m+1/2}}{\sqrt{\pi}\alpha^{k}\Gamma(m+1/2)}\Gamma\Big(m+\frac{k+1}{2}\Big)\Gamma\Big(\frac{k+1}{2}\Big)\,{}_2F_1\bigg(\frac{k+1}{2},m+\frac{k+1}{2};\frac{1}{2};\frac{\beta^2}{\alpha^2}\bigg).
\end{equation}

\begin{proof} By independence,  $\mathbb{E}[|Z|^k]  = \mathbb{E}[|X|^k] \mathbb{E} [ |Y |^{-k}]$, and calculating the absolute moments $\mathbb{E}[|X|^k]$ and $\mathbb{E}[|Y|^{-k}]$ using equation (\ref{mom1}) yields the desired formula for $\mathbb{E}[|Z|^k]$ following a simplification using the identity $\Gamma((k+1)/2)\Gamma((1-k)/2)=\pi \mathrm{sec}(k\pi /2)$.
\end{proof}

\subsection{Product of correlated zero mean normal random variables}

Let $(U_i,V_i)$, $i=1,2$, be independent bivariate normal random vectors with  zero mean vector, variances $(\sigma_{U_i}^2,\sigma_{V_i}^2)$ and correlation coefficient $\rho_i$. Set $s_i=\sigma_{U_i}\sigma_{V_i}$, and let $W_i=U_iV_i$.   It was noted by \cite{gaunt thesis,gaunt prod} that the product $W_i$ has a VG distribution,
\begin{equation}\label{vgrep}W_i\sim\mathrm{VG}\bigg(0,\frac{1}{s_i(1-\rho_i^2)},\frac{\rho_i}{s_i(1-\rho_i^2)},0\bigg),
\end{equation}
 which yielded a new derivation of the PDF of the product $W_i$ which was earlier obtained independently by \cite{gr} and \cite{np16}. Let $T=W_{1}/W_{2}$. On combining (\ref{eq:3}) and (\ref{hlog2}) with (\ref{vgrep}) we immediately deduce the following formulas for the PDF of the ratio $T$. 

\begin{corollary}\label{cor2.10} Let the previous notations prevail. Then, for $t\in\mathbb{R}$,
\begin{align}\label{log}
f_T(t) & =  \frac{ (s_1/s_2)(1-\rho_1^2)^{3/2} t^{-2}}{ \pi^2 \sqrt{1-\rho_2^2}}
  \sum^\infty_{i,j=0} \frac{a_{ij}((i+j)/2)!)^4}{i!j!(i+j+1)!}(2\rho_1)^i\bigg(\frac{1-\rho_1^2}{1-\rho_2^2}\bigg)^j\bigg(\frac{2s_1\rho_2}{s_2t}\bigg)^j \nonumber
 \\ & \quad\times   {}_2F_1\bigg(1+\frac{i+j}{2},1+\frac{i+j}{2};2+i+j; 1-\bigg(\frac{s_1(1-\rho_1^2)}{s_2(1-\rho_2^2) t}\bigg)^2
 \bigg).
\end{align}
The PDF (\ref{log}) simplifies as follows when $\rho_1=\rho_2=0$. For $t\in\mathbb{R}$,
\begin{equation}\label{hlog29}f_T(t)=\frac{2s_2}{\pi^2 s_1}\frac{\log|s_2t/s_1|}{(s_2t/s_1)^2-1}.
\end{equation}
\end{corollary}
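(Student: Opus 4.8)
The plan is to obtain both displays by direct substitution, since the representation (\ref{vgrep}) identifies $W_1$ and $W_2$ as independent VG random variables, so that $T=W_1/W_2$ is precisely the ratio treated in Theorem \ref{thm1}. Concretely, I would set $m=n=0$ in (\ref{eq:3}), substitute $\alpha_1=1/(s_1(1-\rho_1^2))$, $\beta_1=\rho_1/(s_1(1-\rho_1^2))$, $\alpha_2=1/(s_2(1-\rho_2^2))$, $\beta_2=\rho_2/(s_2(1-\rho_2^2))$, and rename the dummy variable $z$ as $t$. The whole content of the proof is then the algebraic simplification of the resulting expression into the form (\ref{log}); there is no conceptual difficulty, only careful bookkeeping.

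First I would record the auxiliary simplifications that drive everything. From $\gamma_i^2=\alpha_i^2-\beta_i^2$ one finds $\gamma_i^2=1/(s_i^2(1-\rho_i^2))$, hence $\gamma_i=1/(s_i\sqrt{1-\rho_i^2})$. The two ratios that appear once $m=n=0$ are $\beta_1/\alpha_1=\rho_1$ and $\beta_2/\alpha_1=\rho_2 s_1(1-\rho_1^2)/(s_2(1-\rho_2^2))$, while the hypergeometric argument simplifies via $\alpha_2^2/\alpha_1^2=(s_1(1-\rho_1^2)/(s_2(1-\rho_2^2)))^2$, producing exactly the argument $1-(s_1(1-\rho_1^2)/(s_2(1-\rho_2^2)t))^2$ in (\ref{log}).

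Next I would simplify the Gamma-function content at $m=n=0$. The prefactor denominator contributes $\Gamma(1/2)^2=\pi$, while $\Gamma(m+n+2+i+j)=(i+j+1)!$, and each of the three factors $\Gamma(m+n+1+(i+j)/2)$, $\Gamma(m+1+(i+j)/2)$, $\Gamma(n+1+(i+j)/2)$ reduces to $((i+j)/2)!$ when $i+j$ is even (and is annihilated by $a_{ij}$ otherwise); combining these three with the explicit $((i+j)/2)!$ already present in (\ref{eq:3}) yields the fourth power $((i+j)/2)!^4$ seen in (\ref{log}). Substituting $\gamma_1\gamma_2/\alpha_1^2=(s_1/s_2)(1-\rho_1^2)^{3/2}/\sqrt{1-\rho_2^2}$ into the prefactor, collecting $(2\beta_1/\alpha_1)^i=(2\rho_1)^i$, and folding the surviving $z^{-j}$ factor together with $(2\beta_2/\alpha_1)^j$ into $((1-\rho_1^2)/(1-\rho_2^2))^j(2s_1\rho_2/(s_2 t))^j$ then gives (\ref{log}) term by term.

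Finally, for the special case $\rho_1=\rho_2=0$ I would not pass through (\ref{log}) but instead specialise the already-simplified formula (\ref{hlog2}), which is the $m=n=0$, $\beta_1=\beta_2=0$ density. Here $\alpha_1=1/s_1$ and $\alpha_2=1/s_2$, so $\alpha_1/\alpha_2=s_2/s_1$ and (\ref{hlog2}) collapses at once to (\ref{hlog29}). The only step that needs genuine attention in the whole argument is the reindexing that absorbs the $z^{-j}$ factor of (\ref{eq:3}) into the $j$th coefficient of (\ref{log}); everything else is routine cancellation.
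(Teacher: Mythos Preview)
Your proposal is correct and follows exactly the route the paper takes: the authors state that the corollary follows ``on combining (\ref{eq:3}) and (\ref{hlog2}) with (\ref{vgrep})'', and your write-up simply makes explicit the parameter substitution $m=n=0$, $\alpha_i=1/(s_i(1-\rho_i^2))$, $\beta_i=\rho_i/(s_i(1-\rho_i^2))$ and the ensuing algebraic simplifications that the paper leaves to the reader. There is no divergence in method, and your bookkeeping (in particular the identification $\gamma_1\gamma_2/\alpha_1^2=(s_1/s_2)(1-\rho_1^2)^{3/2}/\sqrt{1-\rho_2^2}$ and the collapse of the three gamma factors to $((i+j)/2)!^3$) is accurate.
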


\begin{remark}When $s_1=s_2=1$,  (\ref{hlog29}) is also the density of the product of two independent standard Cauchy random variables with PDF $f(x)=\pi^{-1}(1+x^2)^{-1}$, $x\in\mathbb{R}$. This follows from
the well-known result that the quotient of independent standard normal random variables follows the standard Cauchy distribution. More generally, formulas for the PDF of the product of independent Student's $t$ random variables are given in \cite{nad}.
\end{remark}

\appendix

\section{Special functions}\label{appa}
In this appendix, we define the modified Bessel function of the second kind, the Gaussian hypergeometric function and the Meijer $G$-function, and state some basic properties that are needed in this paper. Unless  otherwise stated, these and further properties can be found in the standard references \cite{gradshetyn,luke,olver}. 


The \emph{modified Bessel function of the second kind} is defined, for $m\in\mathbb{R}$ and $x>0$, by
\[K_m(x)=\int_0^\infty \mathrm{e}^{-x\cosh(t)}\cosh(m t)\,\mathrm{d}t.
\]
For $m=0,1,2,\ldots$, we have the elementary representation
\begin{equation}\label{special} K_{m+1/2}(x)=\sqrt{\frac{\pi}{2x}}\sum_{j=0}^m\frac{(m+j)!}{(m-j)!j!}(2x)^{-j}\mathrm{e}^{-x}.
\end{equation}


The \emph{Gaussian hypergeometric function} is defined by the power series
\begin{equation}
\label{gauss}
{}_2F_1(a,b;c;x)=\sum_{j=0}^\infty\frac{(a)_j(b)_j}{(c)_j}\frac{x^j}{j!},
\end{equation}
for $|x|<1$, and by analytic continuation elsewhere. Here $(u)_j=u(u+1)\cdots(u+k-1)$ is the ascending factorial. 

A special case is the following:
\begin{equation}\label{hlog} {}_2F_1(1,1;2;x)=-x^{-1}\log(1-x).
\end{equation}
The hypergeometric function has the following behaviour as $x\rightarrow 1^{-}$.

\noindent{1. $c-a-b>0$:}
\begin{align}\label{f1}{}_2F_1(a,b;c;1)=\frac{\Gamma(c)\Gamma(c-a-b)}{\Gamma(c-a)\Gamma(c-b)}.
\end{align}
\noindent{2. $c-a-b=0$:}
\begin{align}\label{f2}{}_2F_1(a,b;a+b;x)\sim-\frac{\Gamma(a+b)}{\Gamma(a)\Gamma(b)}\log(1-x), \quad x\rightarrow1^{-}.
\end{align}
\noindent{3. $c-a-b<0$:}
\begin{align}\label{f3}{}_2F_1(a,b;c;x)\sim\frac{\Gamma(c)\Gamma(a+b-c)}{\Gamma(a)\Gamma(b)}(1-x)^{c-a-b}, \quad x\rightarrow1^{-}.
\end{align}


The \emph{Meijer $G$-function} is defined, for $x\in\mathbb{R}$, by the contour integral
\begin{equation}\label{mdef}G^{m,n}_{p,q}\bigg(x \, \bigg|\, {a_1,\ldots, a_p \atop b_1,\ldots,b_q} \bigg)=\frac{1}{2\pi i}\int_L\frac{\prod_{j=1}^m\Gamma(b_j-s)\prod_{j=1}^n\Gamma(1-a_j+s)}{\prod_{j=n+1}^p\Gamma(a_j-s)\prod_{j=m+1}^q\Gamma(1-b_j+s)}x^s\,\mathrm{d}s,
\end{equation}
where the integration path $L$ separates the poles of the factors $\Gamma(b_j-s)$ from those of the factors $\Gamma(1-a_j+s)$, and we use the convention that the empty product is $1$.

The $G$-function satisfies the relations
\begin{align}\label{meijergidentity0}G_{p,q}^{m,n}\bigg(x \; \bigg| \;{a_1,\ldots,a_p \atop b_1,\ldots,b_q}\bigg)&=G_{q,p}^{n,m}\bigg(x^{-1} \, \bigg| \,{1-b_1,\ldots,1-b_q \atop 1-a_1,\ldots,1-a_p}\bigg),\\
\label{meijergidentity}x^\alpha G_{p,q}^{m,n}\bigg(x \; \bigg| \;{a_1,\ldots,a_p \atop b_1,\ldots,b_q}\bigg)&=G_{p,q}^{m,n}\bigg(x \, \bigg| \,{a_1+\alpha,\ldots,a_p+\alpha \atop b_1+\alpha,\ldots,b_q+\alpha}\bigg).
\end{align}
The hypergeometric function can be expressed in terms of the Meijer $G$-function:
\begin{equation}\label{2f1m}{}_2F_1(a,b;c;1-x)=\frac{\Gamma(c)}{\Gamma(a)\Gamma(b)\Gamma(c-a)\Gamma(c-b)}G_{2,2}^{2,2}\bigg(x \, \bigg| \,{1-a,1-b \atop 0,c-a-b}\bigg).
\end{equation}
(see \texttt{http://functions.wolfram.com/07.23.26.0007.01}). Also, on combining equations 5.4(1) and 5.4(13) of \cite{luke}, we obtain the indefinite integral formula
\begin{equation}\label{mint}\int x^{\alpha-1}G_{p,q}^{m,n}\bigg(x \; \bigg| \;{a_1,\ldots,a_p \atop b_1,\ldots,b_q}\bigg)\,\mathrm{d}x=x^\alpha G_{p+1,q+1}^{m,n+1}\bigg(x \, \bigg| \,{1-\alpha,a_1,\ldots,a_p \atop b_1,\ldots,b_q,-\alpha}\bigg).
\end{equation}

\section*{Acknowledgements}
SL is supported by a University of Manchester Research Scholar Award.

\footnotesize

\end{document}